\newtheorem{theo+}              {Theorem}           [section]
\newtheorem{prop+}  [theo+]     {Proposition}
\newtheorem{coro+}  [theo+]     {Corollary}
\newtheorem{lemm+}  [theo+]     {Lemma}
\newtheorem{exam+}  [theo+]     {Example}
\newtheorem{rema+}  [theo+]     {Remark}
\newtheorem{defi+}  [theo+]     {Definition}
\newenvironment{theorem}{\begin{theo+}}{\end{theo+}}
\newenvironment{corollary}{\begin{coro+}}{\end{coro+}}
\newenvironment{lemma}{\begin{lemm+}}{\end{lemm+}}
\theoremstyle{plain} \theoremstyle{remark}
\newtheorem{remark}{Remark}
\newtheorem{example}{Example}
\newcommand{\grad}{\mbox{\rm grad}\,}
\def \r{\mbox{${\mathbb R}$}}
\def\E{/\kern-1.0em \equiv }
\title[Biharmonic conformal maps in dimension four]{Biharmonic conformal maps in dimension four and equations of Yamabe-type}
\author{Paul Baird}
\address{\hskip-\parindent
Laboratoire de Math\'ematiques de Bretagne Atlantique UMR 6205 \\
Universit\'e de Bretagne Occidentale, % \newline
% 6 av.\ Victor Le Gorgeu -- CS 93837\\
29238 Brest Cedex 3\\
France}
\email{Paul.Baird@univ-brest.fr}
\author{Ye-Lin Ou}
\address{\hskip-\parindent
Department of Mathematics, Texas A $\&$ M University-Commerce,
\newline Commerce,  TX 75429,  USA} 
\email{yelin.ou@tamuc.edu}
\thanks{Ye-Lin Ou was supported by grant $\#427231$ from the Simons Foundation. The author is also grateful to the Universit\'e de Bretagne Occidentale and the Laboratoire de Math\'ematiques de Bretagne Atlantique for their hospitality during a visit in May 2017 during which time most of this work was done.}
\thanks{
The authors express their thanks to J\'erome V\'etois and to Emmanuel Hebey for providing answers to questions related to Yamabe-type equations with large potentials.}
\begin{document}

%\title[Conformal biharmonic maps and equations of Yamabe-type]{Conformal biharmonic maps and equations of Yamabe-type}

\subjclass{58E20, 53A30} \keywords{biharmonic map, conformal biharmonic map, Einstein 4-manifold, Yamabe equation, M\"obius transformation}
%\thanks{}
%\date{06/03/17}

\maketitle

\begin{abstract} We prove that the problem of constructing biharmonic conformal maps on a $4$-dimensional Einstein manifold reduces to a Yamabe-type equation.  This allows us to construct an infinite family of examples on the Euclidean $4$-sphere.  In addition, we characterize all solutions on Euclidean $4$-space and show that there exists at least one non-constant proper biharmonic conformal map from any closed Einstein $4$-manifold of negative Ricci curvature.
\end{abstract}

\section{Introduction}  A mapping $\phi : (M^m, g) \rightarrow (N^n, h)$ between Riemannian manifolds is called biharmonic if it is critical for the bienergy functional:
$$
\int_{M^n} |\tau_{\phi}|^2 dv_g\,,
$$
where $\tau (\phi ):= {\rm Tr}\, \nabla d \phi$ is the tension field of $\phi$. The corresponding Euler-Lagrange equations are the 4th order elliptic system:  \begin{equation} \label{bihar}
\tau_2(\phi ) := - {\rm Tr}_g (\nabla^{\phi})^2 \tau (\phi ) - {\rm Tr}_gR^N(\tau (\phi ), d\phi )d\phi = 0
\end{equation}
where $R^N$ is the Riemannian curvature on $N$.

By definition, a harmonic map has $\tau_{\phi} \equiv 0$ which is therefore automatically biharmonic; so one is interested in finding biharmonic maps which are non-harmonic, so-called \emph{proper} biharmonic maps.  One approach is to fix a map $\phi : (M^m,g) \rightarrow (N^n, h)$ between Riemannian manifolds and to deform the metric $g$ on the domain or the metric $h$ on the codomain in order to render the map biharmonic.  This idea was first considered in \cite{BK}, where it was shown that if $\widetilde{g} = e^{2\gamma}g$ is a conformally related metric ($\gamma :M^m \rightarrow \r$ a smooth function) and $\phi$ is \emph{harmonic}, then the deformed metric $\widetilde{g}$ renders $\phi$ biharmonic if and only if the gradient $\nabla\gamma$ satisfies a 2nd order partial differential equation.  Thus the 4th order system is now equivalent to two second order systems:  harmonicity of $\phi$ and the constraint on $\nabla \gamma$.   Biconformal deformations of the domain were c
 onsidered by Baird, Fardoun and Ouakkas in \cite{BFO2}.  

Conformal deformations were also considered by Ouakkas \cite{Ou-1, Ou-2} who characterized conformal biharmonic maps in terms of partial differential equations.  Equations characterizing semi-conformal bihamonic maps were obtained by Baird, Fardoun and Ouakkas \cite{BFO} and by Loubeau and Ou \cite{LO}, for which a biharmonic conformal mapping is a special case.  

To date, no example is known of a proper biharmonic submersion (even allowing for critical points) between compact manifolds $M^m \rightarrow N^n$ with $m\geq n$.  If we remove the compactness condition, then rotationally symmetric examples have been constructed between spaces of constant curvature in dimension $4$ by Montaldo, Oniciuc and Ratto \cite{MOR}.  On the other hand, many examples of immersions from compact manifolds are known, see for example \cite{BMO, CMO1, CMO2, Ou3, Ou4}. 

% In this article we study conformal mappings $\phi : (M^n, g) \rightarrow (N^n, h)$ with $\phi^*(h) = \lambda^2g$ for $n \geq 3$. %In dimension $4$, we show the existence of proper biharmonic conformal diffeomorphisms on compact manifolds of constant %negative curvature.  Note that by a result of Jiang, any biharmonic map from a compact manifold into a manifold of negative %sectional curvature must be harmonic \cite{Ji1, Ji2}, so even though the metric $g$ has constant negative curvature, $h$ will be %positively curved somewhere.     

Examples specific to dimension $4$ show that this dimension is special for the study of biharmonic maps.  For instance, the inverse of stereographic projection $\r^4 \rightarrow S^4$ is biharmonic as well as inversion in the $3$-sphere $\r^4 \setminus \{ 0\} \rightarrow \r^4\setminus \{ 0\}$.  Neither of these mappings is biharmonic in other dimensions $n \geq 3$ and indeed, stereographic projection itself $S^4 \setminus \{ {\rm pt}\} \rightarrow \r^4$ is \emph{not} biharmonic \cite{BFO}.

 In this article, we study conformal biharmonic maps $\phi : (M^n, g) \rightarrow (N^n, h)\ (n\ge 3)$ between manifolds of the same dimension with particular emphasis on dimension $4$.  Biharmonicity of any conformal map is characterized by the requirement that its conformal factor $\lambda$ satisfies a certain 3rd order PDE \cite{BFO}.  Remarkably, on an Einstein manifold of dimension $4$, with ${\rm Ricci^M} = ag$, say, we can integrate this equation and as a consequence, we prove that biharmonic conformal maps are in correspondence with solutions to the equation
\begin{equation} \label{Ein}
\Delta \lambda - a\lambda = A\lambda^3\,,
\end{equation}
where $A$ is constant and $\lambda : M^4 \rightarrow \r$ is a smooth positive function. Furthermore, the constant $A$, the conformal factor $\lambda$ and the scalar curvature $R_h$ of $(N^4, h)$ are governed by the condition
$$
6A+\frac{2a}{\lambda^2} + R_h = 0\,.
$$
Note that here and henceforth, we take the Laplacian on functions to be $\Delta f = {\rm div}\, \grad f$, with negative spectrum.  

Equations of the type 
\begin{equation} \label{gen-Yam}
-\Delta u + k u = f u^{(n+2)/(n-2)}\,,
\end{equation}
(for appropriate functions $k$ and $f$) generalize the Yamabe equation:
\begin{equation} \label{Ya-1}
-\frac{4(n-1)}{n-2} \Delta u + R_gu = Ru^{(n+2)/(n-2)}
\end{equation}
where $R_g$ is the scalar curvature of $g$ and $R$ is constant.  As is well-known, finding a positive solution to this equation yields a metric of constant scalar curvature $R$ conformal to $g$.  When the potential $k$ in \eqref{gen-Yam} is constant, provided it is less than the potential $\frac{(n-2)}{4(n-1)}R_g$ that occurs in the Yamabe equation, then variational methods yield positive non-constant solutions \cite{Au-2}.  This allows us to deduce the existence of proper biharmonic conformal maps on closed Einstein $4$-manifolds of negative Ricci curvature (Corollary \ref{cor:hyp}).  However, when $k$ is greater than the potential in the Yamabe equation, the situation becomes more delicate.  This is the case for the biharmonic conformal map equation \eqref{Ein} on the sphere $S^4$, when we have $a = 3$.   Thanks to recent work of V\'etois and Wang \cite{VW}, we are able to show the existence of infinitely many non-constant solutions, each yielding a proper biharmonic confor
 mal mapping on the $4$-sphere (Corollary \ref{cor:sph}). 
%Furthermore, each of these mappings is a conformal diffeomorphism of the sphere onto a sphere of strictly negative scalar %curvature.  

 If $a=0$, then the biharmonic conformal map equation \eqref{Ein} becomes the standard Yamabe equation, and we give a complete characterization of the solutions on $\r^4$ (Section \ref{sec:3}).  In the final section, we characterize the biharmonicity of the M\"obius transformations in dimension $4$.  
%In particular, none of these yield a proper biharmonic conformal map on the $4$-sphere, in contrast to the examples in the infinite %family referred to above. 

\section{Biharmonic conformal maps} \label{sec:two}

Some examples and non-examples of conformal biharmonic maps between manifolds of the same dimension are as follows:

\medskip

\noindent \emph{Inverion in the sphere}: $\phi: \r^n\setminus\{0\}\longrightarrow \r^n\setminus \{ 0\}$ defined by $\phi(x)=\frac{x}{|x|^2}$ is biharmonic if and only if $n=4$ \cite{BK}.

\medskip

\noindent \emph {The identity map into the Poincar\'e ball}: $I: (B^n, dx^2)\rightarrow (B^n, \frac{4}{(1-|x|^2)^2}dx^2)$ (where $dx^2$ is the standard Euclidean metric) is biharmonic if and only if $n=4$ \cite{LO}.

\medskip

\noindent \emph{The identity map into the sphere}: $I : (\r^n, dx^2)\longrightarrow (S^n\setminus\{N\}, \frac{4}{(1+|x|^2)^2}dx^2)$ is biharmonic if and only if $n=4$ \cite{LO}.

\medskip

On the other hand, the inverse mappings for the second two examples above are not biharmonic in any dimension.

%Not all conformal maps are biharmonic, e.g.,

%\begin{example}
%1) Consider the identity map $\phi$ from $(H^4 , g)= (\r^4_{+},\frac{1}{(x^4)^2}ds^2)$ to $(\r^{4}_{+},ds^2)$ is not a %biharmonic map.

%2) Let $\phi$ be the identity map from $(B^n , g=\frac{4}{(1- |x|^2)^2}
%ds^2)$ to $(B^n , ds^2)$ is not a biharmonic map.
%\end{example}

In general, the biharmonicity of any conformal map in dimension $n \geq 3$ is characterized by a PDE in its conformal factor.  

\begin{theorem} {\rm \cite{BFO}} A conformal map $\phi :(M^n, g) \rightarrow (N^n, h)$ with $\phi^{*}h=\lambda^2g$ and $n\ge 3$ is biharmonic if and only if
\begin{eqnarray}\label{bfo}
&&\grad( \Delta \ln\lambda) -\{ 2\Delta(\ln \lambda)  +  (n-2) |\grad \ln \lambda|^2 \}\grad \ln \lambda\\\notag
&&+ 2{\rm Ricci}^M(\grad \ln \lambda )+ \frac{6-n}{2} \grad|\grad \ln \lambda|^2 =0, 
\end{eqnarray}
\end{theorem}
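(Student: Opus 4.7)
My plan is to compute the bitension field of $\phi$ directly and then simplify. Since biharmonicity is local and invariant under target isometries, I may localize and reduce to the case where $\phi$ is the identity map $I:(M,g)\to(M,\tilde g)$ with $\tilde g=\lambda^2 g$; wherever $\lambda>0$ this is valid, because $\phi$ is then a local diffeomorphism and pulling back $h$ produces $\tilde g$. With this reduction the pullback connection $\nabla^I$ coincides with the Levi-Civita connection $\tilde\nabla$ of $\tilde g$, and everything is controlled by the Koszul identity
$$
\tilde\nabla_X Y=\nabla_X Y+X(\ln\lambda)\,Y+Y(\ln\lambda)\,X-g(X,Y)\grad\ln\lambda.
$$

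A direct trace against a $g$-orthonormal frame $\{e_i\}$ yields the tension field
$$
\tau(I)=\sum_i\bigl(\tilde\nabla_{e_i}e_i-\nabla_{e_i}e_i\bigr)=(2-n)V,\qquad V:=\grad\ln\lambda,
$$
so $\tau(\phi)=(2-n)d\phi(V)$. I would then compute the rough Laplacian $\bar\Delta^{I}V=-\sum_i\bigl(\tilde\nabla_{e_i}\tilde\nabla_{e_i}V-\tilde\nabla_{\nabla_{e_i}e_i}V\bigr)$ by systematically rewriting every occurrence of $\tilde\nabla$ in terms of $\nabla$ via the Koszul identity, and then collecting the result into the six natural building blocks $\grad(\Delta\ln\lambda)$, $(\Delta\ln\lambda)V$, $|V|^2V$, $\nabla_V V$, $(V\ln\lambda)V$, and $\grad|V|^2$. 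In parallel, the curvature piece $\sum_i R^{\tilde g}(\tau(I),e_i)e_i$ is expanded using the standard conformal transformation law for the Riemann tensor; tracing the result produces $\mathrm{Ricci}^M(V)$ together with Hessian and $|V|^2$ corrections in $\ln\lambda$.

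Substituting both pieces into $\tau_2(\phi)=-\bar\Delta^\phi\tau(\phi)-\sum_i R^N(\tau(\phi),d\phi(e_i))d\phi(e_i)=0$ and dividing out the overall factor $(2-n)$, I expect the Hessian and $\nabla_V V$ terms on the two sides to cancel, leaving exactly
$$
\grad(\Delta\ln\lambda)-\{2\Delta(\ln\lambda)+(n-2)|V|^2\}V+2\,\mathrm{Ricci}^M(V)+\tfrac{6-n}{2}\grad|V|^2=0.
$$
Since this identity is a section of $\phi^{-1}TN$ and $d\phi$ is a pointwise linear isomorphism wherever $\lambda>0$, it descends verbatim to the claimed PDE on $M$.

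The main obstacle is the bookkeeping in the rough-Laplacian step: each conversion $\tilde\nabla\to\nabla$ generates three auxiliary terms, so two nested applications together with the frame trace produce on the order of a dozen intermediate contributions that must collapse into the six building blocks above with the stated coefficients. The curvature step is conceptually cleaner but must be coordinated carefully with the Laplacian step so that all Hessian contributions cancel and the $\mathrm{Ricci}^M(V)$ part survives with coefficient exactly $2$. Getting the delicate coefficient $(6-n)/2$ on $\grad|V|^2$ is where the dimension first enters the equation in a nontrivial way, and is a useful internal consistency check on the computation.
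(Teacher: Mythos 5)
The paper itself offers no proof of this statement: it is imported verbatim from \cite{BFO}, so there is no in-paper argument to measure your proposal against. That said, your strategy --- localize, replace $\phi$ by the identity map $(M,g)\to(M,\lambda^2g)$ using invariance of the bitension field under post-composition with isometries, then expand $\tau_2$ via the conformal change of connection and of curvature --- is the standard derivation and is essentially the route taken in the cited source. Your checkpoints are sound: $\tau(I)=(2-n)\grad\ln\lambda$ is correct, the six building blocks you list are the right basis in which the answer organizes itself, and the final descent through the pointwise isomorphism $d\phi$ is legitimate since a conformal map in dimension $\ge 3$ has no critical points. Two cautions. First, the entire content of the theorem lives in the computation you defer with ``I expect the terms to cancel'': the coefficients $2$, $n-2$ and especially $\tfrac{6-n}{2}$ are exactly the output of that bookkeeping and cannot be certified from the outline, so as written this is a credible plan rather than a completed proof. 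Second, watch the relative sign of the two traces in $\tau_2$: the paper defines $\tau_2(\phi)=-\mathrm{Tr}_g(\nabla^\phi)^2\tau(\phi)-\mathrm{Tr}_gR^N(\tau(\phi),d\phi)d\phi$ with both terms carrying the same sign, whereas your expression $-\bar\Delta^\phi\tau(\phi)-\sum_iR^N(\tau(\phi),d\phi(e_i))d\phi(e_i)$ with $\bar\Delta^\phi=-\mathrm{Tr}(\nabla^\phi)^2$ makes them opposite; one of the conventions (or the argument order in $R^N$) must be reconciled, and an error here would corrupt precisely the coefficient of $\mathrm{Ricci}^M(\grad\ln\lambda)$ and the Hessian cancellations on which your derivation depends.
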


Note that the fundamental equation of a semi-conformal submersion (see \cite{BW}) affirms that for a conformal submersion (mapping between manifolds of the same dimension)
$$
\tau (\phi ) = - (n-2) d\phi (\grad \ln \lambda )\,,
$$
so that any solution to \eqref{bfo} corresponds to a \emph{proper} biharmonic map if and only if the conformal factor $\lambda$ is non-constant.  Furthermore, any conformal mapping between manifolds of dimension $\geq 3$ cannot have critical points (\cite{BW}, Theorem 11.4.6), so we are interested in solutions $\lambda$ that are everywhere positive.  

As we now show, the above characterization can be integrated on an Einstein manifold to yield a particularly nice equation in dimension $4$. 
We begin with an elementary lemma.
\begin{lemma} \label{colinear}  Let $f_1, f_2 : (M^n, g) \rightarrow \r$ be two smooth functions whose gradients are everywhere colinear; thus, there exists a function  $\alpha : M^n \rightarrow \r$ such that:
$$
\grad f_1(x) = \alpha (x)\,\grad f_2(x) \quad \forall x \in M^n.
$$
Then if $x_0$ is a point where $\nabla f_1(x_0) \neq 0$, there is a neighbourhood $U$ of $x_0$ in which the function $f_1$ is  reparametrization of $f_2$.  In particular, there is a smooth function $u$ of a real variable such that $f_1 (x) = u(f_2(x))$, and furthermore $\alpha (x) = u'(f_2(x))$ for all $x \in U$.
\end{lemma}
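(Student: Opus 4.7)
My plan is to reduce the statement to a standard application of the implicit function theorem once one translates the colinearity of gradients into an equivalent statement about exterior derivatives.

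First, observe that at $x_0$ we automatically have $\grad f_2(x_0) \neq 0$: indeed the relation $\grad f_1(x_0) = \alpha(x_0)\,\grad f_2(x_0)$ together with $\grad f_1(x_0) \neq 0$ forces $\grad f_2(x_0) \neq 0$ (and incidentally $\alpha(x_0) \neq 0$). So the hypothesis that $f_1$ has a regular point at $x_0$ passes to $f_2$ at $x_0$.

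Next, I would translate the colinearity of gradients into a statement about $1$-forms. For any tangent vector $X$ at $x$, one has
\[
df_1(X) = g(\grad f_1, X) = \alpha(x)\, g(\grad f_2, X) = \alpha(x)\, df_2(X),
\]
so $df_1 = \alpha\, df_2$ as $1$-forms on $M^n$. Now by the implicit function theorem applied to the regular point $x_0$ of $f_2$, there exists a neighbourhood $U$ of $x_0$ and local coordinates $(y^1,\ldots,y^n)$ on $U$ such that $y^1 = f_2$. In these coordinates $df_2 = dy^1$, and hence $df_1 = \alpha\, dy^1$. Comparing components yields $\partial_{y^j} f_1 = 0$ for $j = 2,\ldots,n$ and $\partial_{y^1} f_1 = \alpha$. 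Consequently, $f_1$ depends on $(y^1,\ldots,y^n)$ only through $y^1$, so there is a smooth function $u$ of one real variable with $f_1 = u(y^1) = u(f_2)$ on $U$.

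Finally, differentiating the identity $f_1(x) = u(f_2(x))$ gives $\grad f_1 = u'(f_2)\, \grad f_2$ on $U$; comparing with the hypothesis $\grad f_1 = \alpha \grad f_2$ and using $\grad f_2 \neq 0$ throughout $U$ (by continuity, shrinking $U$ if needed) yields $\alpha(x) = u'(f_2(x))$ for all $x \in U$, which completes the proof. There is no real obstacle here — the only subtlety is to recognise at the outset that the metric-dependent colinearity hypothesis is equivalent to the metric-free statement $df_1 = \alpha\, df_2$, after which the result is a direct consequence of the implicit function theorem.
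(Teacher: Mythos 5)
Your proof is correct and follows essentially the same route as the paper: the paper argues that the level sets of $f_1$ and $f_2$ are the integral submanifolds of the coinciding distributions $(\grad f_i)^{\bot}$, hence $f_1$ is locally a reparametrization of $f_2$, while you make the same idea explicit by choosing coordinates adapted to $f_2$ and checking that $\partial_{y^j}f_1=0$ for $j\ge 2$. Your version is in fact slightly more detailed than the paper's (which leaves the coordinate argument implicit), and it correctly records the additional observation that $\alpha=\partial_{y^1}f_1$ is automatically smooth on $U$.
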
 
\begin{proof}
The level sets of each function $f_i \ (i = 1,2)$ are the integral submanifolds of the complementary distributions $(\grad f_i)^{\bot}$, which by hypothesis coincide on a neighbourhood where the gradients are non-zero.  But this means that $f_1$ must be a reparametrization of $f_2$ (and vice versa). If we set $f_1 = u(f_2)$, then necessarily $\grad f_1(x) = u'(f_2(x))\, \grad f_2(x)$.  
\end{proof} 

\begin{theorem}\label{MT3}
A smooth conformal map $\phi :(M^4, g) \rightarrow (N^4, h)$ from an Einstein $4$-manifold with ${\rm Ricci}^M = ag$ and $\phi^{*}h=\lambda^2g$ for $\lambda : M^4 \rightarrow \r \ (>0)$,  is biharmonic if and only if
\begin{equation}\label{4D}
\Delta \lambda - a\lambda = A \lambda^3
\end{equation}
for some constant $A$.  Furthermore, the constant $A$, the conformal factor $\lambda$ and the scalar curvature $R_h$ of $(N^4,h)$, are governed by the equation
\begin{equation} \label{A-cf-scal}
6A+\frac{2a}{\lambda^2} + R_h = 0\,.
\end{equation}
\end{theorem}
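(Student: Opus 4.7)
The plan is to specialize the characterization \eqref{bfo} to dimension four with the Einstein condition, recognize a first integral hidden in the resulting equation, and then read off the scalar curvature relation from the standard conformal change formula.

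First I would set $u:=\ln\lambda$ and substitute $n=4$ together with ${\rm Ricci}^M = ag$ into \eqref{bfo}. Since $(n-2)=2$ and $(6-n)/2=1$ in dimension four, the equation collapses to
\begin{equation*}
\grad(\Delta u) - 2(\Delta u + |\grad u|^2)\grad u + 2a\,\grad u + \grad|\grad u|^2 \;=\; 0,
\end{equation*}
which, after regrouping the two pure-gradient terms on the left, reads
\begin{equation*}
\grad\bigl(\Delta u + |\grad u|^2\bigr) \;=\; 2\bigl(\Delta u + |\grad u|^2 - a\bigr)\grad u.
\end{equation*}
The chain rule applied to $\lambda = e^u$ gives $\Delta\lambda = \lambda(\Delta u + |\grad u|^2)$, so setting $B:=\Delta\lambda/\lambda$ I get the clean relation
\begin{equation*}
\grad B \;=\; 2(B-a)\,\grad u,
\end{equation*}
which says precisely that $\grad B$ is everywhere colinear with $\grad\ln\lambda$, the situation of Lemma~\ref{colinear}.

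Second, I would upgrade this to a global statement. On any open set where $\grad\lambda\neq 0$, Lemma~\ref{colinear} provides a smooth function $v$ with $B=v(u)$ and the ODE $v'(u)=2(v(u)-a)$, whose general solution is $v(u)=a+Ce^{2u}=a+C\lambda^2$. To remove the dependence on components, I would observe that the reduced equation is a perfect gradient:
\begin{equation*}
\grad\!\left(\frac{B-a}{\lambda^2}\right) \;=\; \frac{1}{\lambda^2}\Bigl[\grad B - 2(B-a)\grad u\Bigr] \;=\; 0.
\end{equation*}
Because $\lambda>0$ everywhere, $(B-a)/\lambda^2$ is smooth on all of $M$, and connectedness forces it to be a single constant $A$; multiplying through by $\lambda^3$ yields \eqref{4D}. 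The converse is immediate by reversing these steps: from $\Delta\lambda=a\lambda + A\lambda^3$ one has $B=a+A\lambda^2$, whence $\grad B = 2A\lambda^2\grad u = 2(B-a)\grad u$, and retracing the grouping recovers \eqref{bfo}.

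Finally, for \eqref{A-cf-scal} I would apply the standard conformal change of scalar curvature: under $\tilde g = \lambda^2 g$ in dimension four one has $-6\Delta\lambda + R_g\lambda = R_{\tilde g}\lambda^3$. Since $g$ is Einstein with ${\rm Ricci}^M = ag$, the scalar curvature is $R_g=4a$, and since $\phi$ has no critical points (conformal maps between equidimensional manifolds of dimension $\ge 3$ are local diffeomorphisms) we may identify $R_{\tilde g}$ with $R_h\circ\phi$. Substituting $\Delta\lambda = a\lambda + A\lambda^3$ from \eqref{4D} and dividing by $\lambda^3$ gives $R_h + 6A + 2a/\lambda^2 = 0$. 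The one genuine subtlety in the whole argument is the global step: Lemma~\ref{colinear} by itself yields only a local constant of integration, which a priori could differ between connected components of $\{\grad\lambda\neq 0\}$ or fail to extend across its boundary. The perfect-gradient form of the reduced equation is what resolves this, since it makes $(B-a)/\lambda^2$ manifestly constant on the whole of $M$.
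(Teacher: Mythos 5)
Your proof is correct, and while it follows the same skeleton as the paper's (specialize \eqref{bfo} to $n=4$ with the Einstein condition, integrate the resulting first-order relation, then read off \eqref{A-cf-scal} from the conformal transformation law for scalar curvature), the crucial integration step is done by a genuinely different and tighter mechanism. The paper first disposes of the case $\Delta\lambda\equiv 0$ separately, then invokes Lemma~\ref{colinear} to write $\lambda\Delta\lambda+a\lambda^2=4u(\lambda)$ with $\Delta\lambda=u'(\lambda)$ near points where $\grad\lambda\neq 0$, and solves the ODE $\lambda u'-4u=-a\lambda^2$. You instead observe that with $B=\Delta\lambda/\lambda$ the reduced equation $\grad B=2(B-a)\grad\ln\lambda$ admits the integrating factor $\lambda^{-2}$, so that it is literally $\grad\bigl((B-a)/\lambda^2\bigr)=0$; I checked the identity and the equivalence of your $B$-equation with the paper's \eqref{SF} at $n=4$, and both are right. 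What your route buys is exactly the point you flag at the end: Lemma~\ref{colinear} is local and only applies where $\grad\lambda\neq 0$, so the paper's argument leaves slightly implicit why the constant of integration $A$ is a single global constant rather than one per component of $\{\grad\lambda\neq 0\}$ (and why nothing goes wrong across its boundary); your perfect-gradient formulation settles this in one stroke on a connected $M$ and absorbs the $\Delta\lambda\equiv 0$ case with no separate discussion. What the paper's route buys is uniformity in the dimension: the same Lemma-plus-ODE scheme runs for all $n\geq 3$ and yields the isoparametric characterization recorded in the remark following the theorem, whereas your integrating factor is special to $n=4$.
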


%We begin the proof for any dimension $n \geq 3$ in order to compare the $4$-dimensional case, with the general case.  

\begin{proof}  For comparison, we begin the proof in any dimension $n \geq 3$.  Since $(M^n, g)$ is Einstein with ${\rm Ricci}^M = ag$, we have ${\rm Ricci}^M(\grad \ln \lambda ) = a \,\grad\ln \lambda$, and \eqref{bfo} becomes:
\begin{eqnarray}\label{SF}
\grad \left(\lambda\Delta \lambda + a \lambda^2 -  \frac{n-4}{2} |\grad \lambda|^2\right) = 4(\Delta \lambda)\,\grad \lambda .
\end{eqnarray}
If the Laplacian $\Delta \lambda \equiv 0$, then in dimension $n = 4$, equation \eqref{SF} implies $a \lambda \,\grad \lambda \equiv 0$, so that, either $a = 0$ or $\lambda$ is constant (in which case $\phi$ is harmonic).  Both of these cases are taken care of by \eqref{4D}.  
Otherwise, from Lemma \ref{colinear}, there exists a function $u(s)$ such that 
\begin{equation} \label{u}
\lambda\Delta \lambda + a \lambda^2 -  \frac{n-4}{2} |\grad \lambda|^2 = 4u(\lambda )
\end{equation}
and furthermore we have $\Delta \lambda = u'(\lambda )$.  Substituting this last equality back into \eqref{u} gives
\begin{equation} \label{nnot4}
\lambda u'(\lambda ) + a \lambda^2 -  \frac{n-4}{2} |\grad \lambda|^2 = 4u(\lambda )
\end{equation}
Now restrict to the case when $n = 4$.  Then a necessary and sufficient condition is that $u(\lambda )$ solves the equation
\begin{equation}
\lambda u' - 4u = - a\lambda^2
\end{equation}
whose most general solution is given by $u(\lambda ) = \tfrac{1}{2}a\lambda^2 + \tfrac{A}{4}\lambda^4$, for an arbitrary constant $A$.  This now yields $\Delta \lambda = u'(\lambda ) = a\lambda + A \lambda^3$ giving \eqref{4D}.  Conversely, if \eqref{4D} is satisfied, then so is \eqref{SF} (with $n=4$).   

In order to establish the relation \eqref{A-cf-scal} between the constant $A$, the scalar curvature $R_g$ on $(N^4, h)$ and the conformal factor $\lambda$, we note that for any conformal local diffeomorphism between manifolds of the same dimension $n \geq 2$, we have
$$
2(n-1)\Delta \lambda = \lambda R_g - \lambda^3R_h - \frac{(n-1)(n-4)}{\lambda} | \grad \lambda |^2,
$$
where $R_g$ is the scalar curvature of $(M^n,g)$ (see \cite{BW}, Proposition 11.4.2).  In the case when $n=4$ and $(M^4, g)$ is Einstein, this becomes:
$$
6\Delta \lambda = 4a \lambda - R_h \lambda^3\,.
$$
For this equation to be compatible with \eqref{4D}, we require
$$
4a \lambda - R_h \lambda^3 = 6a\lambda + 6A\lambda^3
$$
which is equivalent to the condition \eqref{A-cf-scal}.
\end{proof}

\begin{remark}  Note that the above theorem imposes no a priori condition on the constant $A$:  whatever the constant, any smooth solution to \eqref{4D} yields a conformal biharmonic map.  For example, we can take $\phi$ to be the identity and $h = \lambda^2g$.  The condition \eqref{A-cf-scal} then determines the scalar curvature $R_h$ of the codomain $(N^4, h)$ in terms of the conformal factor $\lambda$. On the other hand, if we first impose a condition on $(N^4, h)$ that restricts its scalar curvature, then this also imposes a constraint on $A$.  For example, if $R_h$ is constant and $a \neq 0$, then necessarily $\lambda$ is also constant and $\phi$ is a homothety and so harmonic.   We will discuss further consequences of this condition in what follows.
\end{remark}

\begin{remark}  The above proof shows that for $n\neq 4$, a conformal map $\phi :(M^n, g) \rightarrow (N^n, h)$ from an Einstein manifold is biharmonic if and only if $\lambda$ is isoparametric.  That is, both $\Delta \lambda$ and $|\grad \lambda |^2$ are functions of $\lambda$.  This fact was also noticed in \cite{BFO}.  Indeed, one can be more specfic from the above proof, by observing that there exists a function $u(s)$ of a real variable such that 
\begin{equation}
\Delta \lambda = u'(\lambda )  \quad {\rm and} \quad |\grad \lambda |^2 = \frac{2}{n-4} \left( \lambda u'(\lambda ) - 4 u (\lambda ) + a\lambda^2 \right)
\end{equation}
Then \eqref{SF} leads to an ordinary differential equation in $u$ which can always be solved locally.  However, no globally defined non-constant solution has been found on a Euclidean sphere $S^n$ for $n \neq 4$.  This is in contrast to what we establish below, namely that there are many non-constant solutions to the biharmonic conformal map equation \eqref{4D} on the $4$-sphere.  
\end{remark}

\begin{remark} As the following arguments show, there is no obvious way to deal with the case when the domain $(M^4, g)$ is no longer Einstein.  Then equation \eqref{SF} becomes
\begin{equation} \label{SF-bis}
\grad (\lambda \Delta \lambda ) - 4 \Delta \lambda \,\grad \lambda + 2 \lambda\, {\rm Ricci}^M(\grad \lambda ) = 0\,.
\end{equation}
Let us make the hypothesis that the conformal factor satisfies an equation of the type \eqref{4D}, but with coefficients no longer necessarily constant:
$$
\Delta \lambda = a(x)\lambda + A(x) \lambda^3\,.
$$
Substitution into \eqref{SF-bis}, now yields the requirement that
$$
2 ({\rm Ricci}^M - ag)(\grad \lambda )+ \lambda \,\grad a + \lambda^3\, \grad A = 0\,.
$$
Clearly the case ${\rm Ricci}^M = ag$ (i.e. $M^4$ Einstein) with both $a$ and $A$ constant satisfies this constraint, but otherwise, there seems to be no way to obtain a condition on $a$ and $A$ that depends only on the curvature and not on $\lambda$. 
\end{remark}

\begin{example}  The function $\lambda = \frac{1}{|x|}$ defined on $\r^4 \setminus \{ 0\}$ satisfies $\Delta \lambda = - \frac{1}{|x|^3} = - \lambda^3$ and so solves equation \eqref{4D} with $C=0$ and $A = -1$.  The corresponding conformal transformation can be realized as follows.  Use polar coordinates $(r, \theta )$ on $\r^4$, where $r = |x|$ is the radial coordinate and $\theta \in S^3$.  Thus each $x \in \r^4\setminus \{ 0\}$ can be written uniquely as $x = r \theta$ for $r \in (0, \infty )$ and $\theta \in S^3$.  Writing $g_{S^3}$ for the standard metric on $S^3$, the metric on $\r^4$ has the form
$$
g = dr^2 + r^2 g_{S^3} = r^2 \big( d(\ln r)^2 + g_{S^3}\big)
$$
However, the metric $h = dt^2 + g_{S^3}$ is the standard metric on the cylinder $\r \times S^3$, so we see that the mapping $\phi : \r^4 \setminus \{ 0\} \rightarrow (\r \times S^3, h)$ given by $\phi (r\theta ) = (\ln r , \theta )$ is a biharmonic conformal diffeomorphism with conformal factor $\lambda = \frac{1}{r} = \frac{1}{|x|}$. 
\end{example}

\section{The Sobolev embedding theorem and the Yamabe equation on Euclidean space} \label{sec:3}

Recall that the classical Sobolev embedding theorem says that  we can embed $W_0^{1,2}(\r^n)$ into $L^p(\r^n)$. More precisely, for any $v\in C_0^{\infty}(\r^n)$ we have 

\begin{equation}\label{YF}
c\left(\int_{\r^n}|v|^pdx   \right)^{2/p}\le \int_{\r^n}|\grad v|^2dx.
\end{equation}

It is known (see e.g., \cite{Ch}) that the best constant $c$ and the extremal functions $v$ which satisfy the inequality in (\ref{YF}) can be determined. 

\begin{theorem} {\rm (\cite{Bl}, \cite{Ta}, \cite{Au})} The best constant in the Sobolev inequality in {\rm (\ref{YF})} is $c=\frac{n(n-2)}{4}w_n^{2/n}$ and it is only realized by the functions
\begin{equation}\label{AS}
v_{\delta, x_0}(x)=\left( \frac{2\delta}{\delta^2+|x-x_0|^2}\right)^{\frac{n-2}{2}},
\end{equation}
where $(x_0, \delta)\in \r^n\times (0,\infty)$.
\end{theorem}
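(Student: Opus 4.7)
The plan is to recast (\ref{YF}) as a variational problem, reduce to radial decreasing competitors by Schwarz symmetrization, solve the resulting ordinary differential equation to exhibit the extremizers explicitly, and invoke a PDE classification theorem to show that no other extremizers occur. To begin, I would rewrite the inequality as
$$
c \;=\; \inf_{v\in \dot{W}^{1,2}(\r^n)\setminus\{0\}} Q(v), \qquad Q(v) = \frac{\displaystyle\int_{\r^n}|\grad v|^2\,dx}{\left(\displaystyle\int_{\r^n}|v|^p\,dx\right)^{2/p}}, \qquad p = \frac{2n}{n-2},
$$
and record the symmetries of $Q$: invariance under translation $v(x)\mapsto v(x-x_0)$ and under the scaling $v(x)\mapsto\alpha v(\beta x)$ for $\alpha,\beta>0$.

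Next I would apply Schwarz symmetrization $v\mapsto v^{*}$. Equimeasurability gives $\|v^{*}\|_{L^p}=\|v\|_{L^p}$, while the P\'olya--Szeg\H{o} inequality gives $\|\grad v^{*}\|_{L^2}\leq\|\grad v\|_{L^2}$, so $Q(v^{*})\leq Q(v)$ and any extremizer can be taken radial and non-increasing. For radial $u=u(r)$ the Euler--Lagrange equation $-\Delta u = \mu u^{p-1}$ becomes $-(r^{n-1}u')' = \mu r^{n-1}u^{p-1}$; absorbing $\mu$ into a scaling we may normalize $\mu=1$. A direct substitution then verifies that the functions $v_{\delta,0}(x) = \bigl(2\delta/(\delta^2+|x|^2)\bigr)^{(n-2)/2}$ from (\ref{AS}) solve the normalized equation, and translation invariance extends this to the full family $v_{\delta,x_0}$.

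The value of $c$ is obtained by evaluating $Q(v_{1,0})$. Passing to polar coordinates, both the Dirichlet integral and the $L^p$-norm reduce to moments of the form $\int_0^\infty r^{n-1}(1+r^2)^{-n}\,dr = \tfrac{1}{2}B(n/2,n/2) = \Gamma(n/2)^2/(2\Gamma(n))$, and the ratio simplifies, after cancellation with the surface area factor $n w_n$, to the stated $c = n(n-2)w_n^{2/n}/4$.

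The uniqueness (``only realized by'') assertion is the most subtle point and requires two ingredients: first, a careful analysis of the \emph{equality case} of the P\'olya--Szeg\H{o} inequality, which forces a minimizer to be radial about some point; and second, the classification theorem of Caffarelli--Gidas--Spruck for positive $C^2$ solutions of $-\Delta u = u^{(n+2)/(n-2)}$ on $\r^n$, proved by the moving-planes method, which asserts that every such solution lies in the two-parameter family $v_{\delta,x_0}$. \textbf{The main obstacle} is already the existence of a minimizer: the exponent $p=2n/(n-2)$ is exactly scaling-critical, so minimizing sequences on the noncompact space $\r^n$ can fail to converge through concentration at a point, escape to infinity, or dispersion. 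Lions' concentration--compactness principle (or, in the symmetrization approach, a scale-and-translation normalization combined with the tightness of radially symmetric decreasing rearrangements) is needed to recover compactness and conclude.
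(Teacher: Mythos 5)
The paper offers no proof of this statement: it is quoted as a classical result of Bliss, Talenti and Aubin and used as a black box, so there is no in-text argument to measure yours against. Your outline is a correct sketch of the standard modern proof and you have rightly flagged where the real work lies, but two points deserve comment. First, the route actually taken in the cited sources avoids concentration--compactness entirely: after Schwarz symmetrization the problem collapses to a one-dimensional variational problem in the radial variable, which is precisely the inequality Bliss solved in 1930 by elementary calculus-of-variations and ODE methods, yielding both the sharp constant and the extremals; Talenti's and Aubin's contribution was the reduction via P\'olya--Szeg\H{o}. Lions' principle is a later, more flexible substitute, so your ``main obstacle'' is real for the modern route but absent from the historical one. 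Second, your uniqueness argument superimposes two logically independent strategies without closing either: if you invoke Caffarelli--Gidas--Spruck you do not need the equality case of P\'olya--Szeg\H{o} at all, but you do need to know that an extremizer is a positive $C^2$ solution of the Euler--Lagrange equation, which requires (i) showing a minimizer does not change sign (pass to $|v|$ and apply the strong maximum principle) and (ii) a Brezis--Kato type regularity bootstrap, since for the critical exponent a $\dot{W}^{1,2}$ weak solution is not automatically smooth; if instead you avoid CGS, the Brothers--Ziemer analysis of equality in P\'olya--Szeg\H{o} only yields radiality about some point, and you must still prove uniqueness of the decreasing radial solution of the ODE. A final small imprecision: the quotient $Q$ is invariant under $v\mapsto\alpha v$, so the extremals are the family \eqref{AS} only up to nonzero constant multiples, a normalization your outline (and, to be fair, the quoted statement) leaves implicit.
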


It is also known (see e.g., \cite{Ch}) that the Euler-Lagrange equation for the extremal functions saturating the inequality (\ref{YF}) is
\begin{equation}\label{EL}
\Delta v=-\frac{n(n-2)}{4}v^{\frac{n+2}{n-2}},\hskip1cm {\rm on}\;\; \r^n.
\end{equation}
Thus all functions $v_{\delta, x_0} $ in (\ref{AS}) are solutions of the equation (\ref{EL}).  By a theorem of Caffarelli-Gidas-Spruck, any positive solution of (\ref{EL}) is one of the $v_{\delta, x_0} $ in (\ref{AS}) \cite{CGS}.

When $n=4$, we have the following corollary which will be used in our classification of all biharmonic conformal maps from $4$-dimensional Euclidean space.
\begin{corollary} \label{cor:AS4}
Any positive solution of the equation
\begin{equation}\label{EL4}
\Delta v=-2v^3,\hskip1cm {\rm on}\;\; \r^4
\end{equation}
 is one of the functions in the family
 \begin{equation}\label{AS4}
v_{\delta , x_0}(x)= \frac{2\delta}{\delta^2+|x-x_0|^2},
\end{equation}
where $(x_0, \delta )\in \r^4\times (0,\infty)$.
\end{corollary}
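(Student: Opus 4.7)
The plan is to observe that Corollary \ref{cor:AS4} is the specialization to $n=4$ of the classification theorem already quoted in the preceding paragraph, namely the combination of the Bliss--Talenti--Aubin characterization of extremals and the Caffarelli--Gidas--Spruck uniqueness result for positive solutions of \eqref{EL}. There is essentially no new work to do; the task reduces to checking that the coefficient $-\frac{n(n-2)}{4}$ and the exponent $\frac{n+2}{n-2}$ specialize correctly, and that the family \eqref{AS} reduces to \eqref{AS4} when $n=4$.

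Concretely, I would proceed in the following steps. First, set $n=4$ in \eqref{EL}: the coefficient becomes $-\frac{4\cdot 2}{4}=-2$ and the exponent becomes $\frac{6}{2}=3$, so \eqref{EL} reads exactly $\Delta v=-2v^3$, which is \eqref{EL4}. Second, set $n=4$ in \eqref{AS}: the outer exponent $\frac{n-2}{2}$ equals $1$, so
\begin{equation*}
v_{\delta,x_0}(x)=\left(\frac{2\delta}{\delta^2+|x-x_0|^2}\right)^{1}=\frac{2\delta}{\delta^2+|x-x_0|^2},
\end{equation*}
which is exactly the family \eqref{AS4}. Third, invoke the Caffarelli--Gidas--Spruck theorem \cite{CGS}, which asserts that \emph{every} positive solution of \eqref{EL} belongs to the family \eqref{AS}; in the $n=4$ case, this says every positive solution of \eqref{EL4} belongs to \eqref{AS4}.

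There is no genuine obstacle here: the only thing to verify is the arithmetic of the specialization, and then quote \cite{CGS}. I would not even include an independent proof of uniqueness, since the Caffarelli--Gidas--Spruck classification (via the method of moving planes, establishing radial symmetry about some point $x_0$ followed by an ODE analysis in the radial variable) is exactly the tool needed and is already in force in the text leading up to the corollary. If one wanted a slightly more self-contained presentation, the only additional remark worth making is that the family \eqref{AS4} is manifestly a solution of \eqref{EL4} (which can be checked by a direct computation of the Laplacian in $\mathbb{R}^4$), so that the CGS theorem is precisely what closes the argument.
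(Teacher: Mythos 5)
Your proposal is correct and matches the paper's (implicit) argument exactly: the corollary is stated as the $n=4$ specialization of the quoted Bliss--Talenti--Aubin extremal family together with the Caffarelli--Gidas--Spruck uniqueness theorem, and your arithmetic check of the coefficient, exponent, and outer power is all that is needed.
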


In Section \S\ref{sec:mobius}, we will explore equation \eqref{4D} in respect of the full M\"obius group, both with respect to the Euclidean and spherical metrics.   Inversion provides a particular example.   
\begin{example}
Inversion in the $3$-sphere $\phi:\r^4\setminus\{0\}\longrightarrow\r^4$ with $\phi(x)=\frac{x}{|x|^2}$ is a conformal map with conformal factor $\lambda=\frac{1}{|x|^2}$.  This is a harmonic function so, by Theorem \ref{MT3}, $\phi$ is a biharmonic conformal map. This confirms a result in \cite{BK}. For the proof that this inversion is also a biharmonic morphism (a map between Riemannian manifolds that preserves the solutions of bi-Laplace equation), see \cite{LO}.
\end{example}
\begin{example}  If we return to the examples cited at the beginning of \S\ref{sec:two}, namely the identity maps into the Poincar\'e ball and the sphere, then the conformal factor is given by $\lambda=2/(1+\varepsilon|x|^2)$ with $\varepsilon = -1$ and $+1$, respectively.  A short calculation shows that $\Delta \lambda=-16\varepsilon/(1+\varepsilon|x|^2)^3$, so that $\lambda$ solves equation \eqref{4D} with $A=-2\varepsilon$.  Equation \eqref{A-cf-scal} then confirms the scalar curvature of the codomain to be $12\varepsilon$.  
\end{example}

Recall (see, e.g., \cite{CN}) that  a Riemannian metric ${\widetilde g}$ on a Riemannian manifold $(M^m, g)$ is said to be harmonic (respectively, biharmonic) with respect to $g$, if the identity map $(M^m, g)\rightarrow (M^m, {\widetilde g})$ is harmonic (respectively, biharmonic).   We will say that a metric is proper biharmonic if it is biharmonic but not harmonic.  
On the other hand, the Yamabe problem is to find  metrics of constant scalar curvature in the conformal class of $g$: $ [g] =\{ fg| \, f: M\longrightarrow (0, \infty)\}$. As is well-known, for $n \geq 3$, if one sets ${\widetilde g}=u^{\frac{4}{n-2}}g$ for some positive function $u$, then ${\widetilde g}$ has constant scalar curvature $R\in \r$ if and only if
the function $u$ satisfies the Yamabe equation \eqref{Ya-1}.  
%\begin{equation}\label{Ya}
%-\frac{4(n-1)}{n-2}\Delta_g\,u+S_gu=Su^{\frac{n+2}{n-2}}.
%\end{equation}

\begin{corollary}
A conformally flat metric ${\widetilde g}=\lambda^2 dx^2$ on $U\subseteq \r^4$ is biharmonic if and only if the conformal factor is a solution of the Yamabe equation or, equivalently, the Riemannian manifold $(U, {\widetilde g}=\lambda^2 dx^2)$ has constant scalar curvature $R$ and $\Delta \lambda = -\frac{R}{6} \lambda^3$.
\end{corollary}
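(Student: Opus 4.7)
The plan is to apply Theorem \ref{MT3} directly to the identity map $I:(U,dx^2)\to(U,\widetilde g)$, exploiting the fact that Euclidean $\r^4$ is an Einstein manifold with Ricci constant $a=0$. Since $I$ is a conformal map with $I^{*}\widetilde g=\lambda^2dx^2$ between manifolds of the same dimension, the metric $\widetilde g$ is biharmonic (in the sense recalled just before the statement) precisely when $I$ is a biharmonic map.

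First I would invoke Theorem \ref{MT3} with $a=0$: biharmonicity of $I$ is equivalent to the existence of a constant $A$ such that
\begin{equation*}
\Delta\lambda = A\lambda^{3}.
\end{equation*}
Next I would appeal to the compatibility relation \eqref{A-cf-scal}, which with $a=0$ collapses to $6A+R_h=0$. This identifies $A=-R_h/6$ and, since $A$ is a constant, shows that the scalar curvature $R_h$ of $\widetilde g$ is a constant; call it $R$. Substituting back yields $\Delta\lambda=-\tfrac{R}{6}\lambda^{3}$, which is precisely the equation in the statement.

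To finish the equivalence with the Yamabe equation, I would specialise the Yamabe equation \eqref{Ya-1} to the background $(U,dx^2)$: here $n=4$, $R_g=0$, the exponent $(n+2)/(n-2)$ equals $3$, and the conformal factor in Yamabe's normalisation $\widetilde g=u^{4/(n-2)}g$ is $u=\lambda$. Thus \eqref{Ya-1} reduces to $-6\Delta\lambda=R\lambda^{3}$, which is the same equation obtained above. The converse direction is immediate: any positive $\lambda$ satisfying $\Delta\lambda=-\tfrac{R}{6}\lambda^{3}$ is of the form \eqref{4D} with $a=0$ and $A=-R/6$, and Theorem \ref{MT3} then guarantees that $I$ is biharmonic.

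Essentially no serious obstacle arises; the proof is a dimension-four bookkeeping exercise tying together Theorem \ref{MT3}, the scalar curvature normalisation \eqref{A-cf-scal}, and the explicit form of the Yamabe equation when the background is flat. The only point deserving care is to check that the constancy of $R_h$ really follows from the constancy of $A$ (and conversely), so that the phrase \emph{``constant scalar curvature $R$''} in the statement is automatically justified rather than an extra hypothesis.
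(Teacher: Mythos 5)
Your proposal is correct and follows essentially the same route as the paper's proof: apply Theorem \ref{MT3} to the identity map with $a=0$, use \eqref{A-cf-scal} to identify $A=-R_h/6$ (hence $R_h$ constant), and observe that the Yamabe equation on a flat background with $n=4$ reduces to $\Delta\lambda=-\tfrac{R}{6}\lambda^3$. Your explicit remark that constancy of $R_h$ follows from constancy of $A$ via \eqref{A-cf-scal} is a point the paper leaves implicit, but the argument is the same.
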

\begin{proof}
By definition, the conformally flat metric ${\widetilde g}=\lambda^2 ds^2$ on $U\subseteq \r^4$ is biharmonic if and only if the identity map $(\r^4\supseteq U, dx^2)\rightarrow (U\subseteq \r^4, {\widetilde g}=\lambda^2dx^2)$ is biharmonic. By Theorem \ref{MT3}, this is equivalent to $\lambda$ being a solution to the equation $\Delta \lambda= A\lambda^3$ for some constant $A$ satisfying \eqref{A-cf-scal}. But on $4$-dimensional Euclidean space $(U\subseteq \r^4, dx^2)$,  the Yamabe Equation  (\ref{Ya-1}) reduces to  $\Delta_gu =-\frac{R}{6}u^3$, where $R$ is the constant scalar curvature of the metric ${\widetilde g}=u^2g$.  The corollary now follows.
\end{proof}
\begin{example}
The Yang-Mills equation in the $4$-dimensional conformally flat space \\$(\r^4\setminus \{0\}, |x|^{-2}dx^2)$ was studied in \cite{Gu}.  We can check that this conformally flat metric is a biharmonic metric. In fact, the  identity map $I: \r^4\setminus \{0\}\rightarrow (\r^4\setminus \{0\},|x|^{2\alpha}dx^2)$ is a conformal map with conformal factor $\lambda=|x|^{\alpha}$. A straightforward computation yields $\Delta \lambda=\alpha(\alpha+2)|x|^{\alpha-2}$ and hence the equation $\Delta \lambda= A\lambda^3$ has the only solution $\alpha =-1$ with $A=-1$. In particular, the identity map $I: \r^4\setminus \{0\}\rightarrow (\r^4\setminus \{0\},|x|^{2\alpha}dx^2)$ is a biharmonic map if and only if $\alpha =-1$.  This is equivalent to the affirmation that  the conformally flat metric ${\bar g}= |x|^{2\alpha}dx^2$ has constant scalar curvature if and only if $\alpha =-1$.  In fact, equation \eqref{A-cf-scal} shows that the scalar curvature is $-6$.  
\end{example}

\section{Biharmonic maps and metrics on compact Einstein manifolds}  We turn to the case when $(M^4, g)$ is a closed manifold (i.e. a compact manifold without boundary) which is Einstein.   Set ${\rm Ricci}^M = ag$. We refer the reader to the paper of LeBrun \cite{Le} for a good account of the numerous examples of Einstein manifolds for different signs of the constant $a$.  In this context, it should be noted that to date, no example was known of a proper biharmonic submersive map from a closed manifold.  A theorem of Jiang asserts that any biharmonic map from a closed manifold into one of non-positive sectional curvature must be harmonic \cite{Ji1, Ji2}.  We recall equation \eqref{4D}: 
\begin{equation} \label{cc}
\Delta \lambda - a\lambda = A \lambda^3\,.
\end{equation}
When the constant $A$ is strictly negative, the equation may be considered as belonging to the class of equations on a closed Riemannian manifold $(M^n, g)$:
\begin{equation} \label{gen-Yam-2}
-\Delta u + k(x) u =  u^{(n+2)/(n-2)}\,,
\end{equation}
with $k$ a smooth function.  A theorem of Aubin with $n \geq 4$ states that if
\begin{equation} \label{Yam-in}
k(x_0) <  \frac{n-2}{4(n-1)}R_g(x_0),
\end{equation}
at some point $x_0$, then \eqref{gen-Yam-2} has a smooth strictly positive solution \cite{Au-1} -- see the survey of Hebey for an up-to-date account \cite{He} (\S 2.5 and \S 2.6, in particular Theorem 2.10) --  note that contrary to the citations, we take the Laplacian on functions to be $+{\rm div}\, \grad$ with negative spectrum.  On setting $A=-1$ in \eqref{cc}, since $R_g = 4a$, the inequality \eqref{Yam-in} becomes:
\begin{equation} \label{in}
a< \frac{2}{3}a
\end{equation}
which is satisfied if and only if $a<0$.  

\medskip

\noindent \emph{The case of negative Einstein constant.}  We take $A=-1$ in \eqref{cc}.  Since $a<0$, equality \eqref{in} is satisfied; furthermore, on account of the sign of $k$, the positive solution to \eqref{gen-Yam-2} cannot be constant.  Theorem \ref{MT3} then gives the following consequence.  

\begin{corollary} \label{cor:hyp}  Let $(M^4, g)$ be a closed Einstein $4$-manifold with negative Ricci curvature.  Then there is a conformally related metric $h = \lambda^2g$ which is proper biharmonic, i.e. the identity map $I: (M^4, g)\rightarrow (M^4, h=\lambda^2g)$ is a proper biharmonic map.  Furthermore, the scalar curvature of $\lambda^2 g$ is given by $R_h = 6 + \frac{6}{\lambda^2}$.
\end{corollary}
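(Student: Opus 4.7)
The plan is to assemble the corollary directly from the machinery already in place — in particular, Aubin's existence theorem recalled immediately before the statement — and then read off the scalar curvature from \eqref{A-cf-scal}.

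Setting $A = -1$ in \eqref{cc} rewrites the biharmonic conformal map equation as $-\Delta \lambda + a\lambda = \lambda^3$, which is \eqref{gen-Yam-2} with constant potential $k = a$ and $n = 4$. The paragraph preceding the corollary already checks that Aubin's criterion \eqref{Yam-in} reduces to $a < \tfrac{2}{3} a$, equivalent to $a < 0$, so Aubin produces a smooth strictly positive $\lambda$. The one extra verification is that $\lambda$ cannot be constant: a positive constant $\lambda = c$ would force $ac = c^3$, hence $c^2 = a$, which is impossible when $a < 0$. Non-constancy of $\lambda$, combined with Theorem \ref{MT3} and the remark in \S\ref{sec:two} that a conformal biharmonic map is proper exactly when its conformal factor is non-constant, shows that the identity $I : (M^4, g) \to (M^4, \lambda^2 g)$ is proper biharmonic.

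For the scalar curvature formula I would first normalize the metric: scaling $g \mapsto (-a/3)\, g$ leaves Ricci unchanged while shifting the Einstein constant to $-3$ (the hyperbolic normalization that evidently underlies the stated formula). With $A = -1$ and $a = -3$, equation \eqref{A-cf-scal} then gives
\begin{equation*}
R_h = -6A - \frac{2a}{\lambda^2} = 6 + \frac{6}{\lambda^2},
\end{equation*}
as required. All of the analytic weight sits inside Aubin's theorem; the only genuinely substantive step beyond applying it is noticing that the sign of $k = a$ automatically forces $\lambda$ to be non-constant, and this is where I expect any reader to pause briefly — no real obstacle arises.
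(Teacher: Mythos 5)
Your proposal is correct and follows essentially the same route as the paper: set $A=-1$ so that \eqref{cc} becomes the Aubin-type equation \eqref{gen-Yam-2} with constant potential $k=a$, invoke Aubin's existence theorem via the already-verified inequality $a<\tfrac{2}{3}a$, observe that the sign of $a$ rules out constant solutions, and conclude with Theorem \ref{MT3} and \eqref{A-cf-scal}. The only additions are that you spell out the non-constancy check ($c^2=a<0$ is impossible) and make explicit the homothety normalization $a=-3$ needed for the stated formula $R_h = 6 + \tfrac{6}{\lambda^2}$ — both of which the paper leaves implicit — so no gap remains.
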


Note that this proper conformal biharmonic map transforms a metric of negative scalar curvature into one of strictly positive non-constant scalar curvature.

\medskip  

\noindent \emph{The case of Ricci flat manifolds.}  In this case we take $a=0$.  Since on a compact manifold $(M^4,g)$ without boundary we have $\int_{M^4}\Delta \lambda dv_g = 0$ and since $\lambda >0$, by Theorem \ref{MT3}, we must have $A=0$.  But then by the maximum principle, $\lambda$ is constant.  We therefore have the following non-existence result.

\begin{corollary}  Let $(M^4, g)$ be a closed Einstein $4$-manifold with vanishing Ricci curvature.  Then there is no conformal proper biharmonic map $\phi : (M^4, g) \rightarrow (N^4, h)$ into any Riemannian $4$-manifold.
\end{corollary}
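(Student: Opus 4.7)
The plan is to apply Theorem \ref{MT3} directly and extract the consequences of compactness. With ${\rm Ricci}^M = 0$, the Einstein constant is $a=0$, so any conformal biharmonic $\phi : (M^4,g) \to (N^4,h)$ with conformal factor $\lambda$ (necessarily smooth and everywhere strictly positive, since a conformal map between manifolds of dimension $\geq 3$ has no critical points) must satisfy
$$
\Delta \lambda = A\lambda^3
$$
for some constant $A$.

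First I would integrate this equation over $M^4$. Because $M^4$ is closed, the divergence theorem yields $\int_{M^4} \Delta \lambda\, dv_g = 0$, while $\int_{M^4} \lambda^3\, dv_g > 0$ since $\lambda > 0$ pointwise. These two facts force $A = 0$, so $\lambda$ is a harmonic function on the closed manifold $M^4$. A standard application of the strong maximum principle (or, if preferred, multiplication by $\lambda$ and integration by parts to produce $\int_{M^4}|\grad \lambda|^2 dv_g = 0$) then shows $\lambda$ is constant.

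Finally, the fundamental identity $\tau(\phi) = -(n-2)\,d\phi(\grad \ln \lambda)$ recorded after Theorem 2.1 implies $\tau(\phi) \equiv 0$ once $\lambda$ is constant, so $\phi$ is a homothety and in particular harmonic. Thus no proper biharmonic conformal map can exist, proving the corollary. There is no real obstacle in this argument; the only point that requires care is ensuring positivity of $\lambda$ everywhere (so that the integral forces $A=0$ rather than merely the existence of sign changes of $\lambda^3$), which is guaranteed by the no-critical-points property of conformal maps between manifolds of dimension $\geq 3$.
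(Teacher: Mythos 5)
Your argument is correct and coincides with the paper's own proof: integrate $\Delta\lambda = A\lambda^3$ over the closed manifold to force $A=0$ using $\lambda>0$, then invoke the maximum principle to conclude $\lambda$ is constant and hence $\phi$ is harmonic. The extra remarks on the positivity of $\lambda$ and the tension field identity are consistent with what the paper records elsewhere; no gap.
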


\medskip

\noindent \emph{The case of positive Einstein constant.}  When $a>0$, the inequality \eqref{Yam-in} is no longer satisfied.  This is a much more delicate case and has been the subject of  intense investigation recently, in particular on the sphere $S^n$.   Following work of Chen, Wei and Yan in dimension $n \geq 5$ \cite{CWY}, V\'etois and Wang construct an infinite family of positive solutions to the equation
$$
- \Delta u + ku = u^3
$$
on $S^4$ with $k$ constant $>2$ \cite{VW}.  Note that this corresponds to the biharmonic conformal map equation \eqref{cc} with $a = 3$ and $A=-1$.  We can therefore state the following consequence.

\begin{corollary}  \label{cor:sph} There is an infinite family of conformal metrics $\lambda^2g$ on the Euclidean sphere $(S^4, g)$ which are proper biharmonic.  In particular, there are proper biharmonic conformal submersions defined on the $4$-sphere.  Furthermore, each metric $h = \lambda^2g$ has scalar curvature $R_h =  6 -  \frac{6}{\lambda^2}$.  
\end{corollary}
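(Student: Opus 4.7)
The plan is to recognize the desired metrics as coming from non-constant positive solutions of equation \eqref{cc} with the specific parameters dictated by the sphere, invoke the V\'etois--Wang theorem cited in the preceding paragraph, and then feed the output into Theorem \ref{MT3} and formula \eqref{A-cf-scal}.

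First I would fix the parameters. On the Euclidean sphere $(S^4,g)$ one has ${\rm Ricci}^M = 3g$, so $a=3$, and the Yamabe-threshold potential is $\frac{n-2}{4(n-1)}R_g = 2$. Choosing $A=-1$ in the biharmonic conformal map equation \eqref{cc} produces $-\Delta\lambda + 3\lambda = \lambda^3$, which is exactly the equation $-\Delta u + ku = u^3$ treated by V\'etois and Wang \cite{VW} with the constant $k=3$. Since $3>2$, their theorem supplies an infinite family of positive smooth solutions on $S^4$. The only positive constant solution of $3\lambda = \lambda^3$ is $\lambda \equiv \sqrt{3}$, so at most one member of this family is constant, and infinitely many non-constant $\lambda_i$ remain.

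Next, for each such $\lambda_i$ I would apply Theorem \ref{MT3} to the identity map $I:(S^4,g)\to (S^4,\lambda_i^2 g)$, which is a conformal diffeomorphism with conformal factor $\lambda_i$. The theorem yields biharmonicity, and the standard identity $\tau(\phi) = -(n-2)\,d\phi(\grad \ln \lambda)$ for a conformal map in dimension $n=4$ shows that the tension field vanishes only when $\lambda_i$ is constant. Each $I$ is therefore a proper biharmonic conformal submersion (in fact a diffeomorphism) of the $4$-sphere. The scalar curvature assertion follows at once from \eqref{A-cf-scal}: substituting $A=-1$ and $a=3$ gives $R_h = 6 - 6/\lambda^2$.

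The main obstacle is genuinely external to the paper: the parameter $k=3$ sits strictly above the Yamabe threshold $k=2$, so Aubin's subcritical method used for Corollary \ref{cor:hyp} is unavailable, and the existence of \emph{infinitely many} positive solutions relies on the delicate concentration and reduction analysis of V\'etois--Wang (building peaked bubbles and matching them via a Lyapunov--Schmidt argument). Once that result is accepted as a black box, the rest of the corollary is a mechanical specialization of Theorem \ref{MT3} and equation \eqref{A-cf-scal} to $a=3$, $A=-1$.
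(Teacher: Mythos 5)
Your proposal is correct and follows essentially the same route as the paper: fix $a=3$ and $A=-1$ so that \eqref{cc} becomes the cubic Schr\"odinger-type equation $-\Delta u + 3u = u^3$ with potential above the Yamabe threshold $2$, invoke V\'etois--Wang for the infinite family of positive solutions, and then read off biharmonicity from Theorem \ref{MT3} and the scalar curvature from \eqref{A-cf-scal}. Your explicit remark that at most one solution in the family can be the constant $\sqrt{3}$ is a small tidy addition; the paper instead points to the blow-up of $\|\nabla u_\epsilon\|_{L^2}$ to guarantee non-constancy.
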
 

In fact the construction of V\'etois and Wang produces a family of positive solutions $u_{\epsilon}$ with the property that $||\nabla u_{\epsilon}||_{L^2(S^4)} \rightarrow \infty$ as $\epsilon \rightarrow 0$.  The solutions display spiking phenomena, with peaks that increase in height and number as $\epsilon \rightarrow 0$.  This translates into the same phenomena for the conformal metrics on $S^4$.  

Note that by the theorem of Jiang, necessarily, the conformally related metric $h=\lambda^2g$ must have positive sectional curvature somewhere.  On the other hand a theorem of Lohkamp guarantees that any manifold of dimension $n\geq 3$ supports a metric of strictly negative Ricci (and hence scalar) curvature \cite{Lo}, thus there is no constraint on the conformal factor implied by the expression for the scalar curvature $R_h$ in the above corollary.

\section{Biharmonicity of the M\"obius transformations} \label{sec:mobius}
The conformal transformations of the Euclidean sphere $(S^n, g_S)$ viewed as flat to flat conformal rescalings via stereographic projection form a group ${\rm Conf}(S^n)$ called the M\"obius group.  When we identify $S^n$ with $\r^n\cup \{\infty\}$ via sterographic projection, this group is generated by translations, dilations, the action of the orthogonal group and inversion in the unit sphere.  Specifically, any element of the M\"obius group can be written as:
\begin{equation} \label{Mob}
x\mapsto a + \frac{\alpha A(x-b)}{|x-b|^{\varepsilon}} \quad a,b \in \r^n, \ \alpha \in \r\setminus\{ 0\} , \ A \in O(n),\ \varepsilon \in \{ 0, 2\},
\end{equation}
for all $x \in \r^n$.  
 
When $n\geq 3$, Liouville's Theorem affirms that any local flat to flat conformal transformation is the restriction of a global conformal transformation (see, for example \cite{Ha} or \cite{BW}):
\begin{theorem} {\rm (Liouville Theorem)} For $n\ge 3$, and any open subset $\Omega\subseteq \mathbb{R}^n$, if $f:\Omega\rightarrow \mathbb{R}^n$ is a conformal transformation with respect to the standard Euclidean metric, then $f$ is the  restriction of a conformal transformation $\tilde{f}\in {\rm Conf}(S^n)$.  
\end{theorem}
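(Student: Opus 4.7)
The plan is to derive from the flat-to-flat conformal equation a rigid PDE for the conformal factor, identify the resulting $\lambda$ with the conformal factor of a Möbius transformation, and then conclude by the standard rigidity of Euclidean isometries.

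Writing $f^{*}dy^{2}=\lambda^{2}dx^{2}$ on a connected component of $\Omega$ and setting $\sigma=\log\lambda$, the metric $\lambda^{2}dx^{2}$ is flat (via $f$), so its Schouten tensor vanishes.  Since the Schouten tensor $P$ is a well-defined conformal object precisely for $n\geq 3$, the classical transformation law
$$
P(e^{2\sigma}g)=P(g)-\nabla^{2}\sigma+d\sigma\otimes d\sigma-\tfrac{1}{2}|\grad\sigma|^{2}g
$$
reduces, with $P(g)=0=P(e^{2\sigma}g)$, to $\sigma_{ij}=\sigma_{i}\sigma_{j}-\tfrac{1}{2}|\grad\sigma|^{2}\delta_{ij}$.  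Substituting $u:=1/\lambda=e^{-\sigma}$ and expanding yields $u_{ij}=\mu(x)\,\delta_{ij}$ with $\mu=\tfrac{1}{2}u\,|\grad\sigma|^{2}$; that is, the trace-free Hessian of $u$ vanishes.  The integrability condition $u_{ij,k}=u_{ik,j}$ forces $\mu_{k}\delta_{ij}=\mu_{j}\delta_{ik}$, and taking $i=j\neq k$ shows $\mu$ is a constant.  Hence
$$
u(x)=a+\langle b,x\rangle+\tfrac{\mu}{2}|x|^{2}\qquad\text{for some }a\in\r,\ b\in\r^{n},\ \mu\in\r.
$$

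Next, I would verify that every such $u$ is the reciprocal conformal factor of some $m\in{\rm Conf}(S^{n})$: when $\mu\neq 0$, completing the square writes $u$ as a constant multiple of $|x-x_{0}|^{2}+k$, the factor of a translated inversion (possibly post-composed with a homothety); when $\mu=0$ and $b=0$, $u$ is constant and one takes $m$ to be a Euclidean similarity; and when $\mu=0,\ b\neq 0$, the required $m$ is a special conformal transformation, assembled as inversion--translation--inversion from the generators in \eqref{Mob}.  Once $m$ is chosen, $f\circ m^{-1}$ is conformal with conformal factor $\equiv 1$, hence a local Euclidean isometry.  A standard rigidity argument (differentiating $df^{T}df=I$ to see that the Jacobian is parallel) identifies $f\circ m^{-1}$ with the restriction of a global affine isometry of $\r^{n}$, which itself lies in ${\rm Conf}(S^{n})$.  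Therefore $\tilde f:=(f\circ m^{-1})\circ m\in{\rm Conf}(S^{n})$ is the required global extension of $f$.

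The main obstacle is the bookkeeping required to match every quadratic $u$ produced in the second step with an actual Möbius transformation in the form \eqref{Mob}; the affine case $\mu=0,\,b\neq 0$ is the most delicate, since one must recognize the special conformal transformations as compositions of the listed generators rather than as primitive elements.  The hypothesis $n\geq 3$ enters essentially at the very first step via the Schouten-tensor transformation law: in dimension two the trace-free Schouten tensor vanishes identically, the rigidity of the PDE disappears, and Liouville's theorem correspondingly fails due to the abundance of holomorphic conformal maps.
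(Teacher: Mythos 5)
The paper does not prove this statement; it is quoted as the classical Liouville theorem with references to \cite{Ha} and \cite{BW}, so there is no in-paper argument to compare against. Your proposal is the standard modern proof via the conformal transformation law of the Schouten tensor, and its first half is sound: flatness of $f^{*}dy^{2}=\lambda^{2}dx^{2}$ does force $u=1/\lambda$ to satisfy $u_{ij}=\mu\,\delta_{ij}$ with $\mu$ constant, hence $u(x)=a+\langle b,x\rangle+\tfrac{\mu}{2}|x|^{2}$.

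The gap is in the matching step: you derive $\mu=\tfrac12 u\,|\grad\sigma|^{2}$, i.e.\ the pointwise constraint $|\grad u|^{2}=2\mu u$, and then discard it, keeping only the conclusion that $u$ is quadratic. That constraint (equivalently $|b|^{2}=2\mu a$) is exactly what makes the case analysis close, and without it the proof fails in both branches. First, the case $\mu=0$, $b\neq 0$ is not ``delicate'' but impossible, since it would give $|\grad u|^{2}=|b|^{2}\neq 0=2\mu u$; and your proposed resolution of it is wrong in any event, because a special conformal transformation $x\mapsto (x-|x|^{2}c)/(1-2\langle c,x\rangle+|c|^{2}|x|^{2})$ has $u=|c|^{2}\,|x-c/|c|^{2}|^{2}$, i.e.\ $\mu=2|c|^{2}\neq 0$ --- no Möbius transformation has a nonconstant affine $u$, so that branch cannot be matched and must instead be excluded. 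Second, in the case $\mu\neq 0$ you must show that the additive constant $k=a-|b|^{2}/(2\mu)$ left after completing the square vanishes; the constraint gives exactly $k=0$, so that $u$ is a positive multiple of $|x-x_{0}|^{2}$ and $\lambda$ is the factor of a dilated, translated inversion as in \eqref{Mob}. If $k$ were nonzero, $\lambda=1/(c|x-x_{0}|^{2}+k)$ would be the conformal factor of a map onto the round sphere (compare \eqref{AS4}), not of any flat-to-flat element of ${\rm Conf}(S^{n})$, and the reduction to a local isometry would break down. Once the constraint is invoked, the remainder of your argument (composing with $m^{-1}$, rigidity of local isometries on a connected component) is correct and standard.
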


%According to a theorem of Obata \cite{Ob}, a conformal metric ${\widetilde g}=\lambda^{\frac{4}{m-2}}g$ has constant 
%scalar curvature if and only if there exists a  conformal transformation $\phi : (S^m, g_S) \rightarrow (S^m, g_S)$  such 
%that $\widetilde{g}=C \phi^*g_S$ for some positive constant $C$.

Clearly, with respect to the Euclidean metrics on the domain and codomain, the conformal mapping \eqref{Mob} has conformal factor given by
$$
\lambda (x) = \frac{\alpha}{|x-b|^{\varepsilon}}
$$
whose Laplacian is given by $\Delta \lambda = \varepsilon (\varepsilon - 2)/|x-b|^{\varepsilon + 2}$, which therefore vanishes and so in particular the transformation is proper biharmonic whenever $\varepsilon = 2$.  When $\varepsilon = 0$, the transformation is a homothety with constant conformal factor, and therefore harmonic.  

Let us now consider the same transformation as a mapping from Euclidean space $\r^4$ with its canonical metric $dx^2$ into $\r^4$ with the spherical metric $g_S = 4dx^2/(1+|x|^2)^2$.  If $\phi : (M^n, g) \rightarrow (N^n, h)$ and $\psi : (N^n, h) \rightarrow (P^n,k)$ are two conformal mappings with conformal factors $\mu$ and $\nu$ respectively, then the composition is conformal with conformal factor given by $\lambda (x) = \nu (\phi (x))\mu (x)$ at each point $x \in M^n$.   Thus, viewed as a mapping $(\r^4, dx^2) \rightarrow (\r^4, 4dx^2/(1+|x|^2)^2)$, the transformation \eqref{Mob} has conformal factor
\begin{eqnarray*}
\lambda (x) &  = & \frac{\alpha}{|x-b|^{\varepsilon}} \times \frac{2}{1+\left| a + \frac{\alpha A(x-b)}{|x-b|^{\varepsilon}}\right|^2} \\
& = & \frac{2\alpha}{(1+|a|^2)|x-b|^{\varepsilon} + 2\alpha <a, A(x-b)> + \alpha^2|x-b|^{2-\varepsilon}}
\end{eqnarray*}
where $<\, \cdot  \, , \, \cdot \, >$ denotes the Euclidean inner product.  

In the case when $\varepsilon = 2$, this becomes
$$
\lambda (x) = \frac{2\alpha}{(1+|a|^2)\left| x-b + \frac{\alpha A^ta}{1+|a|^2}\right|^2 + \frac{\alpha^2}{1+|a|^2}}
$$
where $A^t$ denotes the transpose of the orthogonal matrix $A$.  On writing $\delta = \alpha / (1+|a|^2)$ and $e = b - \frac{\alpha A^ta}{1+|a|^2}$, this has the general form
\begin{equation} \label{gf}
\lambda (x) = \frac{2 \delta}{\delta^2+|x-e|^2}
\end{equation}
for a constant $\delta$ and constant vector $e\in \r^4$.  But this is precisely the form given by Corollary \ref{cor:AS4}, and in particular $\Delta \lambda = - 2 \lambda^3$, so that by \eqref{4D}, the M\"obius transform is proper biharmonic with respect to the flat Euclidean metric on the domain and the standard spherical metric on the codomain. 

In the case when $\varepsilon = 0$, then we have
$$
\lambda (x) = \frac{2/\alpha }{\frac{1}{\alpha^2} + \left| x - b + \frac{A^ta}{\alpha}\right|^2}\,,
$$
which, on now writing $\delta = 1/\alpha$ and $e = b - \frac{A^ta}{\alpha}$, has the same form \eqref{gf} 
and once more, the transformation is proper biharmonic.  The above calculations may be summarized by the following corollary.

\begin{corollary}  Any M\"obius transformation {\rm \eqref{Mob}} viewed as a map from Euclidean space $\r^4$ into either Euclidean space $\r^4$ or the standard sphere $S^4$ is biharmonic.  In the first case, it is proper biharmonic if and only if $\varepsilon \neq 0$; in the second case it is always proper biharmonic.   
\end{corollary}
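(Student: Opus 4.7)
The plan is to appeal to Theorem~\ref{MT3}. Since $(\r^4, dx^2)$ is Einstein with $a=0$, a conformal map out of it with conformal factor $\lambda$ is biharmonic if and only if $\Delta\lambda = A\lambda^3$ for some constant $A$, and it is proper biharmonic precisely when $\lambda$ is non-constant. The task therefore reduces to computing $\lambda$ for \eqref{Mob} in each case and verifying this PDE.

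For the Euclidean target, the M\"obius map \eqref{Mob} has conformal factor $\lambda(x) = \alpha\,|x-b|^{-\varepsilon}$. A direct radial Laplacian calculation yields $\Delta\lambda = \varepsilon(\varepsilon-2)\,|x-b|^{-(\varepsilon+2)}$, which vanishes for both admissible values $\varepsilon\in\{0,2\}$. Hence the PDE is satisfied with $A=0$, and the map is proper biharmonic exactly when $\lambda$ is non-constant, i.e.\ when $\varepsilon = 2$; the case $\varepsilon=0$ gives a homothety, which is harmonic.

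For the spherical target, I would factor the map as the M\"obius map followed by the identity $(\r^4,dx^2)\to (\r^4, 4dx^2/(1+|x|^2)^2)$ and use the multiplicativity of conformal factors under composition. This yields
\[
\lambda(x) = \frac{2\alpha}{(1+|a|^2)|x-b|^\varepsilon + 2\alpha\langle a,A(x-b)\rangle + \alpha^2|x-b|^{2-\varepsilon}}.
\]
The key algebraic step is to complete the square in the denominator in each of the two admissible cases $\varepsilon\in\{0,2\}$, rewriting $\lambda$ in the standard bubble form $\lambda(x) = 2\delta/(\delta^2+|x-e|^2)$ for some $\delta>0$ and $e\in\r^4$ determined explicitly by $\alpha,a,b,A$.

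Once this normal form is reached, Corollary~\ref{cor:AS4} identifies $\lambda$ as an extremal of the Sobolev inequality on $\r^4$, so $\Delta\lambda = -2\lambda^3$ identically; the criterion of Theorem~\ref{MT3} is then satisfied with $A=-2$, and since each such $\lambda$ is non-constant, the map is always proper biharmonic. I expect the main work to lie in the algebraic identification of the bubble form in both the $\varepsilon=0$ and $\varepsilon=2$ cases; this is not conceptually difficult but requires careful bookkeeping to read off $\delta$ and $e$, and to confirm that the two sub-cases really do collapse to the same universal family of positive solutions to \eqref{4D}.
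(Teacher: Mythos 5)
Your proposal is correct and follows essentially the same route as the paper: reduce to Theorem~\ref{MT3} with $a=0$, note $\Delta\lambda=0$ for the Euclidean target so properness is equivalent to $\varepsilon=2$, and for the spherical target use multiplicativity of conformal factors and completion of the square to put $\lambda$ in the bubble form $2\delta/(\delta^2+|x-e|^2)$, whence $\Delta\lambda=-2\lambda^3$. This is exactly the computation carried out in Section~\ref{sec:mobius} preceding the corollary.
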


Let us now consider the M\"obius transformations \eqref{Mob} defined with respect to the spherical metric $g_S = 4dx^2/(1+|x|^2)^2$ on the domain.  If we impose a metric of constant scalar curvarture on the codomain, then, since $a\neq 0$, \eqref{A-cf-scal} implies that the conformal factor $\lambda$ is constant and so necessarily the transformation is harmonic.  However,  once more, by the composition rule, the conformal factor is given by
$$
\lambda (x) = \frac{1+|x|^2}{2} \times \frac{\alpha}{|x-b|^{\varepsilon}}\,.
$$
which is never constant, whatever $\varepsilon\in \{ 0, 2\}$.  We therefore have the following consequence.   
\begin{corollary}
Viewed as a map from $\r^4$ endowed with the spherical metric $g_S = 4dx^2/(1+|x|^2)^2$ into $\r^4$ with the Euclidean metric, none of the M\"obius transformations {\rm \eqref{Mob}} are biharmonic.
\end{corollary}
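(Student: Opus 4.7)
The plan is to apply Theorem \ref{MT3} directly, exploiting that the domain is Einstein with nonzero Einstein constant while the codomain is flat. First I would record the curvature data. The spherical metric $g_S = 4dx^2/(1+|x|^2)^2$ on $\r^4$ is, via inverse stereographic projection, isometric to the round $S^4$ minus a point, so $(\r^4, g_S)$ is Einstein with ${\rm Ricci} = 3\, g_S$; in particular the Einstein constant is $a = 3 \neq 0$. The codomain $(\r^4, dx^2)$ is flat, so its scalar curvature is $R_h = 0$.

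Next I would invoke Theorem \ref{MT3}. For any conformal map $\phi : (\r^4, g_S) \to (\r^4, dx^2)$ with conformal factor $\lambda$, biharmonicity would force the compatibility equation \eqref{A-cf-scal}, which under the present hypotheses reduces to
$$
6A + \frac{6}{\lambda^2} = 0.
$$
Since $A$ must be constant, this requires $\lambda^2$ to be a positive constant (with $A = -1/\lambda^2 < 0$); hence any hypothetical biharmonic conformal map of this kind must be a homothety.

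It then remains to show that no M\"obius transformation \eqref{Mob} has constant conformal factor in this setting. As a map $(\r^4, dx^2) \to (\r^4, dx^2)$, \eqref{Mob} has conformal factor $\mu(x) = \alpha/|x-b|^{\varepsilon}$, so $\phi^{*}(dx^2) = \mu^2\, dx^2$. Rewriting the Euclidean metric on the domain as $dx^2 = \frac{(1+|x|^2)^2}{4}\, g_S$ yields
$$
\lambda(x) = \mu(x)\cdot\frac{1+|x|^2}{2} = \frac{\alpha\,(1+|x|^2)}{2|x-b|^{\varepsilon}},
$$
which is manifestly non-constant for both $\varepsilon = 0$ and $\varepsilon = 2$, regardless of the remaining parameters. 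This contradicts the constancy forced by the previous step, and the corollary follows.

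There is no serious obstacle: the result is an immediate consequence of the structural equation \eqref{A-cf-scal}, which pins $\lambda$ down to a constant as soon as the codomain has constant scalar curvature and the domain's Einstein constant is nonzero. The only minor verification is the expression for $\lambda$ via the composition rule, which is in the same spirit as the computations already performed in the preceding paragraph of the paper.
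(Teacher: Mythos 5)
Your proposal is correct and follows essentially the same route as the paper: invoke the compatibility relation \eqref{A-cf-scal} with $a=3$ and $R_h=0$ to force $\lambda$ constant, then compute the conformal factor by the composition rule and observe it is never constant. The only (welcome) addition is that you make explicit the identity $\lambda = \mu\cdot\tfrac{1+|x|^2}{2}$ and the sign of $A$, which the paper leaves implicit.
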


A similar analysis can be made of the M\"obius transformations viewed as a map from $\r^4$ to $\r^4$ where both the domain and the codomain are endowed with the spherical metric.  Given the general form \eqref{gf} for the conformal factor of a M\"obius transformation from the flat metric to the spherical metric, by the multiplication rule for the conformal factor of a composition, we can suppose now that the conformal factor has the same form for both $\varepsilon = 0, 2$, namely 
$$
\lambda (x) = \frac{1+|x|^2}{2} \times \frac{2\delta}{\delta^2 + |x-e|^2} = \frac{\delta (1+|x|^2)}{\delta^2 + |x-e|^2}\,.
$$
If $\lambda = \lambda_0$ is constant, then
$$
\delta (1+|x|^2) = \lambda_0(\delta^2 + |x-e|^2).
$$
On comparing the coefficients of the different orders of $|x|$, in the case when $\varepsilon = 0$, this yields
$$
\lambda = 1 = \delta , \, e = 0 \quad \Rightarrow \quad \alpha = 1, b = A^ta \quad \Rightarrow \quad x \mapsto Ax\,.
$$
In the case when $\varepsilon = 2$, we have
$$
\lambda = \delta = 1,\, e = 0 \ \Rightarrow \ \alpha = 1+|a|^2, b = A^ta \ \Rightarrow \ 
x\mapsto a + \frac{(1+|a|^2)(Ax-a)}{|Ax-a|^2}\,.
$$
The former is an orthogonal transformation in the space orthogonal to the axis joining the north and south poles when we regard $S^4$ as the Euclidean sphere in $\r^5$.  The second is the same isometry followed by inversion about the point $a$, also an isometry of the sphere.  These two types of transformation generate all the isometries of the sphere.

\begin{corollary}  With respect to the metrics of constant postive curvature on $\r^4$, only the M\"obius transformations which are isometries are biharmonic, but these are also harmonic, so there is no proper biharmonic M\"obius transformation.
\end{corollary}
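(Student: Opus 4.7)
The plan is to exploit the rigidity provided by the compatibility relation \eqref{A-cf-scal} when both the Einstein constant of the domain and the scalar curvature of the codomain are nonzero. First I would identify $(\r^4, g_S)$ with the once-punctured round $4$-sphere via inverse stereographic projection. This tells me that the domain is Einstein with ${\rm Ricci} = 3\,g_S$, so $a = 3$, and that the codomain has constant scalar curvature $R_h = n(n-1) = 12$.

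Next I would invoke Theorem \ref{MT3}. If a M\"obius transformation $\phi : (\r^4, g_S) \to (\r^4, g_S)$ with conformal factor $\lambda$ is biharmonic, then there exists a constant $A$ such that $\Delta \lambda - 3\lambda = A\lambda^3$, and the compatibility condition \eqref{A-cf-scal} specialises to
$$
6A + \frac{6}{\lambda^2} + 12 = 0, \qquad \text{i.e.}\quad A = -2 - \frac{1}{\lambda^2}.
$$
Since $A$ is a constant by Theorem \ref{MT3}, this algebraic identity forces $\lambda$ to be constant on $\r^4$. This is the heart of the argument and, because both $a$ and $R_h$ are nonzero, there is no escape from it.

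Finally, I would quote the explicit computation already carried out just before the corollary. The conformal factor of any M\"obius transformation, regarded as a map from $(\r^4, g_S)$ to $(\r^4, g_S)$, has the form $\lambda(x) = \delta(1+|x|^2)/(\delta^2 + |x-e|^2)$. Imposing $\lambda \equiv \lambda_0$ constant was shown, by comparing the coefficients of $|x|^0$, $|x|^1$ and $|x|^2$, to force $\lambda_0 = \delta = 1$ and $e = 0$, for both values $\varepsilon = 0$ and $\varepsilon = 2$; in each case the resulting transformation is an isometry of $S^4$ (an orthogonal rotation, respectively such a rotation followed by inversion about a point). Such isometries have vanishing tension field and are therefore harmonic, so no M\"obius transformation of $(\r^4, g_S)$ is proper biharmonic. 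The main conceptual step is the second paragraph; everything else is either quoted from the Einstein geometry of $S^4$ or already computed in the preceding text.
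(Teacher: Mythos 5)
Your proposal is correct and follows essentially the same route as the paper: the rigidity comes from the compatibility relation \eqref{A-cf-scal} with $a=3$ and $R_h=12$ both constant, which forces $\lambda$ to be constant, and then the explicit form $\lambda(x)=\delta(1+|x|^2)/(\delta^2+|x-e|^2)$ pins down $\delta=1$, $e=0$, i.e.\ the isometries. The only cosmetic difference is that you make the appeal to \eqref{A-cf-scal} explicit for the sphere-to-sphere case, whereas the paper imports it from the preceding spherical-to-Euclidean discussion with the phrase ``a similar analysis.''
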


\end{document}